\def \Nats {\mathds{N}}
\def \Ints {\mathds{Z}}
\DeclareMathOperator{\restrictedto}{\upharpoonright}
\newcommand{\tup}[1]{\langle #1\rangle}
\newcommand{\otn}[2]{#1_{1},\dots ,#1_{#2}}
\newcommand{\set}[1]{\{#1\}}
\newcommand{\setarg}[2]{\{#1\ |\ #2\}}
\newcommand{\setcol}[2]{\{#1 : #2\}}
\newtheorem*{introthm}{Theorem}
\newtheorem{lemma}{Lemma}[section]
\newtheorem{prop}[lemma]{Proposition}
\theoremstyle{definition}
\newtheorem{definition}[lemma]{Definition}
\newtheorem*{nonumdefinition}{Definition}
\newtheorem{observation}[lemma]{Observation}
\newtheorem*{nonumobservation}{Observation}
\newtheorem{notation}[lemma]{Notation}
\newtheorem{remark}[lemma]{Remark}
\newtheorem*{nonumremark}{Remark}
\newtheorem*{question}{Question}
\newtheorem*{nonumexample}{Example}
\DeclareMathOperator{\MaxCliques}{M}
\newcommand{\lang}[1]{\mathcal{L}_{#1}}
\newcommand{\predim}{\lambda}
\newcommand{\dims}{\Lambda}
\newcommand{\closure}[1]{\cl^{#1}}
\newcommand{\cardstar}[1]{|#1|_*}
\newcommand{\maxcliques}[1]{\MaxCliques(#1)}
\newcommand{\strong}{\leqslant}
\DeclareMathOperator{\clq}{clq}
\DeclareMathOperator{\cl}{cl}
\DeclareMathOperator{\pregeometry}{PG}
\def \Cclq {\mathcal{C}^{\clq}}
\def \Corig {\mathcal{C}}
\newcommand{\formula}[1]{\varphi_{T_{#1}}}
\newcommand{\reduct}[1]{#1^T}
\def\nsim{\raise.17ex\hbox{$\scriptstyle\sim$}}
\newcommand{\Mclq}{\mathbb{M}^{\clq}}
\newcommand{\Morig}{\mathbb{M}}
\def\Cc{\mathbb C}
\def\CL{\mathcal L}
\def\0{\emptyset}
\def \tupsize {r}
\def \arity {s}
\title{Reducts of {Hrushovski's} constructions of a higher geometrical arity}
\author[A. Hasson and O. Mermelstein]{Assaf Hasson and Omer Mermelstein}
\date{\today}
\address{Department of Mathematics, Ben Gurion University of the Negev \\ P.O.B 653, Be'er Sheva 8410501, Israel}
\email{hassonas@math.bgu.ac.il}
\email{omermerm@math.bgu.ac.il}
\subjclass{Primary 03C30,03C45; Secondary 03C13}
\keywords{Hrushovski construction, Predimension}
\thanks{The research was partially supported by ISF grant No. 181/16}
\begin{document}
\begin{abstract}
Let $\Morig_n$ denote the structure obtained from Hrushovski's (non collapsed) construction with an n-ary relation and $\pregeometry(\Morig_n)$ its associated pre-geometry. It was shown in \cite{DavidMarcoOne} that $\pregeometry(\Morig_3)\not\cong \pregeometry(\Morig_4)$. We show that $\Morig_3$  has a reduct, $\Mclq$ such that  $\pregeometry(\Morig_4)\cong\pregeometry(\Mclq)$. To achieve this we show that $\Mclq$ is a slightly generalised Fra\"iss\'e-Hrushovski  limit incorporating into the construction non-eliminable imaginary sorts in $\Mclq$.  
%We generalize Hrushovski's predimension function in a manner allowing a structure $A$ to support polynomially (rather than linearly) many relations in $|A|$. We show that the generic construction associated with this generalized predimension function is a proper reduct of Hrushovski's original construction and that its pregeometry is isomorphic to that of the original construction.
\end{abstract}
\maketitle
\section{Introduction}
The class of combinatorial pregeometries associated with a structure (or a theory) is an important invariant in geometric stability theory and its bifurcations, going back to Baldwin-Lachlan \cite{BaldLach}, Zilber's Trichotomy conjecture \cite{ZilberTrichotomy} and its many applications, e.g. \cite{HrushovskiZilber-Zariskigeometries}, Shelah's analysis of super-stable theories \cite[Chapters V, IX, X]{Shelah}
, \cite{Hrushovski-Locallymodularregulartypes} and more. 

In the late 1970s Zilber suggested a classification of the geometries associated with strongly minimal theories based on the algebraic structures interpreted by those theories: trivial if no group is interpretable, projective if the theory is 1-based but non-trivial, or the geometry associated with an algebraically closed field otherwise. 

It follows from the fundamental theorem of projective geometry (e.g, \cite[\S II]{ArtinGeometry}) that (infinite) projective geometries are in one-to-one correspondence with projective spaces, and are classified by their (possibly non-commutative) fields of scalars. Though there is no simple characterisation of the geometries associated with algebraically closed fields, those are characterised by the characteristic of the field, \cite{EvHr}. 

Combined with \cite{Hrushovski-Locallymodularregulartypes} and \cite{Rabinovich} (and more generally, \cite{HaSu}) these observation imply that if $T$ is strongly minimal whose geometry is either that of an algebraically closed field or locally modular then the geometries of reducts of $T$ are partially ordered.  Namely, if $T'$ is a reduct of $T$ as above and $\pregeometry(T)\not\cong\pregeometry(T')$ then the geometry of $T'$ is strictly coarser than the geometry of $T$. 
%$, T_2$ are reducts of a theory $T$,  as above then either the geometries of $T_1$ or $T_2$ are isomorphic or one refines the other. 
Thus, if $T$ is the theory of an algebraic curve over an algebraically closed field, $K$ and $T'$ is a reduct of $T$ then either $T'$ interprets an algebraically closed field, in which case $T$ and $T'$ have isomorphic geometries, or $T'$ is locally modular. In the latter case a reduct $T''$ of $T'$ is either trivial, or $T'$ and $T''$ have the geometries of projective spaces over fields $F'\ge F''$ (respectively), \cite[Corollary 4.5.9]{PillayBook}

Similar results can be obtained for reducts of o-minimal structures (for o-minimal structures this is an immediate consequence of the o-minimal Trichotomy Theorem, \cite{PeStTricho}, for the more general claim, this follows from \cite{HaOnPe}). 

In the present note we show that the above is not true in Hrushovski's (non-collapsed) construction. Let $\Morig_n$ denote the generic structure associated with Hrushovski's construction in the language consisting of a single $n$-ary relation (see Section \ref{reduct} for precise definitions) and $\pregeometry(\Morig_n)$ the pre-geometry associated with its unique regular type of rank $\omega$. Ferreira and Evans show, \cite{DavidMarcoOne}, that $\pregeometry(\Morig_n)\cong \pregeometry(\Morig_m)$ if and only if $n=m$, and the same remains true even locally. It is an easy exercise to verify that if  $n<m$ we can, possibly naming finitely many parameters, identify $\Morig_n$ with a (proper) reduct of $\Morig_m$. E.g., to find $\Morig_3$ in $\Morig_4 $ we can consider the reduct to the relation $S(x,y,z):=R(x,x,y,z)$ (assuming $R$ allows repetitions). In case $R$ does not allow repetitions we can, fixing a generic $a$, consider the reduct $S(x,y,z):=R(a,x,y,z)$. 

Keeping the above notation, our main result is: 
\begin{introthm}
	Let $n>2$ be a natural number, $0<r<n$ and $s=n-r+1$. Then there exists a reduct $\Mclq$ of $\Morig_n$ such that: 
	\begin{enumerate}
		\item $\Mclq$ is a Fra\"iss\'e-Hrushovski limit with respect to a pre-dimension function allowing a polynomial number of relations. 
		\item $\pregeometry(\Mclq)\cong \pregeometry(\Morig_{rs})$. 
	\end{enumerate}
\end{introthm}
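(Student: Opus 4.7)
The plan is to construct $\Mclq$ as a generalised Fra\"iss\'e--Hrushovski limit of an amalgamation class $\Cclq$ in an enriched signature, and then realise it as a reduct of $\Morig_n$. The atomic relation of the reduct is an $rs$-ary relation $R^{\clq}$ built from the original $n$-ary $R$. Since $r+s-1=n$, a natural candidate is obtained by arranging $rs$ distinct elements as the cells of an $r\times s$ grid and declaring $R^{\clq}$ to hold on an enumeration of the grid whenever $R$ holds on each of the $n$-subsets formed by a row together with a column (a row and column share exactly one cell, so such a subset has $s+r-1=n$ elements). There are $rs$ such $n$-subsets, so an $R^{\clq}$-clique contributes predimension $rs-rs=0$ in $\Morig_n$ and is strong there, while in the reduct it is counted as a single relation contributing $rs-1$---precisely the predimension contribution of one generic $R'$-tuple in $\Morig_{rs}$. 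The symmetry group of the grid (a subgroup of $S_r\times S_s$) is not $\0$-definable from $R^{\clq}$ alone, which accounts for the non-eliminable imaginary sort mentioned in the abstract.

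Build $\Cclq$ as the class of finite structures in the language $\{R^{\clq}\}$ together with a sort for the grid imaginaries, endowed with the predimension $\delta(A)=|A|-e_{R^{\clq}}(A)$. Since $R^{\clq}$ has fixed arity $rs$, the number of $R^{\clq}$-tuples is polynomial in $|A|$, fitting the ``polynomial number of relations'' framework of the theorem. Verify the standard axioms---closure under substructures, non-negativity of $\delta$ on strong sets, and the free amalgamation property over strong common subsets---along standard lines, with a mild generalisation to accommodate the imaginary sort, and extract the generic limit $\Mclq$. For the isomorphism $\pregeometry(\Mclq)\cong\pregeometry(\Morig_{rs})$, set up a predimension-preserving correspondence between $\Cclq$ (after quotienting by the imaginary data) and the standard Hrushovski class for an $rs$-ary relation, sending each $R^{\clq}$-clique to a single generic $R'$-tuple and preserving strong embeddings; then pass to generic limits.

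The principal obstacle is the two-way compatibility between the $n$-ary predimension of $\Morig_n$ and the $(rs)$-ary predimension of $\Mclq$. Every generic amalgamation predicted by $\Cclq$ must be realised by an honest $R$-configuration in $\Morig_n$---a delicate matter when several $R^{\clq}$-cliques share cells of their grids, since amalgamations in $\Morig_n$ must respect the $n$-ary predimension. Conversely, $\Morig_n$ must not yield spurious $R^{\clq}$-cliques beyond those expected. The non-eliminable imaginary indexing the grid forces amalgamation to be performed along grid cells rather than merely along underlying tuples, which is the ``slight generalisation'' of the Fra\"iss\'e--Hrushovski construction alluded to in the abstract and where the bulk of the technical work lies.
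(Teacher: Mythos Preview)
Your proposed reduct is not the one the paper constructs, and the proposal has a genuine gap at the point you yourself flag as the ``principal obstacle''. The paper's reduct is \emph{not} built from an $r\times s$ grid; instead, for each $k\ge s$ it defines $\varphi_{T_k}(\bar x_1,\dots,\bar x_k)$ (each $\bar x_i$ an $r$-tuple) to say that there is a hidden $(s-1)$-tuple $\bar y$ with $R(\bar y,\bar x_i)$ for every $i$ and the resulting configuration self-sufficient. A \emph{clique} is then a maximal set of $r$-tuples sharing such a witness; the non-eliminable imaginary is the name of that clique (equivalently, the hidden $\bar y$). The associated predimension is $\lambda(A)=|A|-\sum_{K}(|K|-s+1)$, summed over maximal cliques. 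This is the content of ``allowing a polynomial number of relations'': a single clique of size $k$ supports $\binom{k}{s}$ many $T_s$-instances while contributing only $k-s+1$ to the predimension, so a structure with $\lambda\ge 0$ can carry $\Theta(|A|^s)$ many atomic relations. Your $\delta(A)=|A|-e_{R^{\clq}}(A)$ is the ordinary linear predimension and does not satisfy clause~(1) in the intended sense; saying that the number of $R^{\clq}$-tuples is bounded by $|A|^{rs}$ is true of any finite relational structure and misses the point.

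More seriously, the step ``every generic amalgamation predicted by $\Cclq$ must be realised by an honest $R$-configuration in $\Morig_n$'' fails for your grid picture as stated. A single grid has $\delta=0$ in $\Morig_n$. In $\Corig_{rs}$ one may strongly extend a single $R^{\clq}$-tuple by a second one sharing $rs-1$ of its entries; realising this by replacing one cell $(i_0,j_0)$ of the grid by a fresh point forces $r+s-1=n$ new $R$-relations (one for each cell in row $i_0$ or column $j_0$) against a single new point, so the extension has $\delta=1-n<0$ over the first grid and cannot be embedded strongly in $\Morig_n$. The paper's clique formulation is engineered precisely so that this lifting step (condition~(3) in Subsection~\ref{reductSubsection}) becomes a clean bookkeeping argument: one adjoins a fresh $(s-1)$-tuple $\bar z_L$ for each new maximal clique $L$ and checks directly that $\delta(X/A)\ge \lambda(X\cap B_c/A_c)\ge 0$. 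The pregeometry isomorphism with $\Morig_{rs}$ is then a separate back-and-forth between the two amalgamation classes, after a normalisation lemma ensuring that distinct new $R$-relations yield distinct cliques; this, too, is work you would still have to supply.
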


This result implies, in particular, that the geometries associated with Hrushovski constructions are not linearly ordered by reducts. Indeed, consider $\Morig_4$ and identify $\Morig_3$ as a (proper) reduct thereof. By our main theorem (with $n=3, r=s=2)$) there is a (proper) reduct $\Mclq$ of $\Morig_3$ such that $\pregeometry(\Mclq)\cong \pregeometry(\Morig_4)$. Finally, $\pregeometry(\Morig_4)\not \cong \pregeometry(\Morig_3)$ by  \cite{DavidMarcoOne}. 

In \cite{EvTrivial} Evans gives an example of a theory $T$ which is, in the terminology of \cite{JBGoode}, trivial for freedom (namely, if $a,b,c$ are pairwise independent over a parameter set $A$ then $a$ and $b$ are independent over $Ac$) and $1$-based with a reduct isomorphic to $\Morig_n$ (so neither trivial nor 1-based). The example of \cite{EvTrivial} is, however, strictly stable and therefore not of a geometric nature. Our example takes place in an $\omega$-stable theory, and the geometry we study is precisely that of the unique regular type of rank $\omega$. In view of the results of \cite{DavidMarcoTwo} there is good reason to believe that the results of the present paper can be reproduced in the context of strongly minimal theories. Although we see no obvious obstacles, the technicalities to sort out seem to go beyond the scope of the present note.  

Finally, we remark that though, in the notation of the main theorem,  $\pregeometry(\Mclq)\cong \pregeometry(\Morig_{rs})$, the two structures are not isomorphic (or even elementarily equivalent). Thus, it does not follow from our result that $\Mclq$ itself has a reduct whose geometry is isomorphic to that of $\Morig_k$ for some $k>rs$. The following remains open: 

\begin{question}
	Let $\Morig_n$ be the generic of Hrushovski's (non-collapsed) ab inito construction with a single $n$-ary relation. Is there, for all $k>n$, a reduct $\mathcal M(k)$ of $\Morig_n$ such that $\pregeometry(\mathcal M(k))\cong \pregeometry(\Morig_k)$? 
\end{question}

%During the late 1970s and early 1980s Zilber's Trichotomy conjecture, which in a somewhat wider context than its original formulation could be viewed as an attempted classification of all such geometries in (super)-stable theories, was a major driving force in model theory. In the mid 1980s Hrushovski \cite{Hns,HrushovskiSecond} provided a technique for constructing new geometries, refuting Zilber's conjecture. The many possible variants allowed by Hrushovski's constructions still resist any (conjectural) classification in the spirit of Zilber's Trichotomy. 
%
%Whereas Zilber's conjecture suggested an algebraic dictionary for the classification of geometries (of, say, strongly minimal structures), Hrushovski shows that (many of) his new examples support no algebraic structure at all. An important step towards a reformulation of Zilber's conjecture is, therefore, a classification of Hrushovski's examples. In the present note we provide a new variant of Hrushovski's construction, which -- we believe -- could provide a key to such a classification. 

\subsection{Preliminaries}

A category whose objects form a class of finite (relational) structures $\mathbb{C}$, closed under isomorphisms and substructures, and whose morphisms, $\strong$, are (not necessarily all) embeddings, is an \emph{amalgamation class} (or has the \emph{Amalgamation property} and \emph{Joint Embedding property}) if:
\begin{itemize}
\item [(AP)]
If $A,B_1,B_2\in \mathbb{C}$ are such that $A\strong B_1,B_2$, then there exists some $D\in\mathbb{C}$ and embeddings $f_i:B_i \to D$ such that $f_i[B_i]\strong D$, $f_1\restrictedto A = f_2\restrictedto A$, and $f_1[A]\strong D$.
\item [(JEP)]
If $A_1,A_2\in \mathbb{C}$, then there exists some $B\in\mathbb{C}$ and embeddings $f_i:A_i\to B$ such that $f_i[A_i]\strong B$ for $i=1,2$.
\end{itemize}
By Fra\"iss\'e's Theorem, to every amalgamation class with countably many isomorphism types is associated a unique (up to isomorphism) countable structure $\mathbb{M}$ satisfying
\begin{enumerate}
\item
Every finite substructure of $\mathbb{M}$ is an element of $\mathbb{C}$.
\item
Whenever $A\strong \mathbb{M}$ and $A\strong D\in\mathbb{C}$, there is an embedding $f:D\to \mathbb{M}$ fixing $A$ pointwise such that $f[D]\strong\mathbb{M}$.
\end{enumerate}
We call $\mathbb{M}$ a generic structure for $\mathbb{C}$.

Hrushovski showed that if $\CL$ is a countable finite relational language\footnote{This generalises easily to infinite languages, provided every finite structure supports only finitely many non-empty relations.}, and for a finite $\CL$-structure $A$ we let $\delta(A):=|A|-k(A)$, where $k(A)$ is the number of $\CL$-relations in (powers of) $A$, then the class $\Cc$ of all finite $\CL$-structures 
$A$ such that $\delta(B)\ge 0 $ for all $B\subseteq A$ is an amalgamation class with respect to a class of, so called, self-sufficient (or strong) embeddings. Provided the language contains at least one $n$-ary relation for $n\ge 3$ or two binary relations, the $\Cc$-generic structure $\mathbb M$ is $\omega$-stable with a unique non-trivial regular type, $p_\Cc$. We call (the pregeometry of) $p_\Cc$ the pregeometry of $\mathbb M$. 

In the present note (Section \ref{construction}) we show that Hrushovski's construction can be carried out in a similar way if, e.g., in the context of a unique $s$-ary relation (possibly on $r$-tuples, rather than singletons), instead of defining $\delta(A):=|A|-k(A)$ we let $\lambda(A):=|A|-\sum_K (|K|-\arity+1)$ where the sum ranges over all maximal (large enough) \emph{cliques} in $A$, allowing $A^r$ to support a uniformly bounded polynomial (rather than linear) number of relations. A clique $K$ is determined by any $(s-1)$-tuple of its members, and thus can be viewed as an equivalence relation on $(s-1)$-tuples. In model theoretic terms cliques can be viewed as an imaginary sort that need not be eliminable in the generic structure. From that perspective, the above pre-dimension function is merely the straightforward adaptation of Hrufshovski's original pre-dimension function to this two sorted structure, and the non-linear number of relations is an artifact of counting (wrongly) the number of relations in definable congruence classes (of $r$-tuples) -- namely, cliques.

%We conclude Section \ref{construction} by showing that Hrushovski's original construction is a special case of the clique construction. 
In Section \ref{reductSubsection} we show that the generic structure associated with the clique construction is isomorphic to a (proper) reduct of Hrushovski's original construction. From the point of view described above, this reduct, unlike the original construction, does not admit geometric elimination of imaginaries, and our construction can be viewed as a first step towards generalising the construction to incorporate imaginary elements. It is, apparently, a necessary step in classifying all reducts of Hrushovski's ab initio construction. We do not know, at this stage, how to identify all imaginaries associated with an arbitrary such reduct. 

As a test case we suggest the following: Let
\[
T(x_1, x_2, x_3, x_4) := \exists y_1, y_2, y_3 R(y_1, y_2, y_3)\wedge R(y_1, x_1, x_2)\wedge R(y_2, x_2, x_3) \wedge R(y_3, x_3, x_4)
\]
and let $M_T$ be the reduct of $\mathbb M_3$ whose unique atomic relation is $T$. Is $M_T$ an ab initio structure? With respect to what pre-dimension function? What is its geometry? 

Finally, in Section \ref{pregeometrySubsection} we conclude the proof of our main theorem. 

%We conclude this note by showing that the pregeometries of both structures are isomorphic. 
%
%In a subsequent paper we show that, under mild conditions, the geometries of Hrushovski's constructions can, themselves, be constructed using the clique construction introduced in the present note, opening the door, among others, to geometric stability theoretic tools for their classification.

%The present work uses results and techniques from \cite{Mermelstein2016} to give a considerably shorter proof of the main result of \cite{HassonMermelsteinPreprint} also putting the construction of \cite{HassonMermelsteinPreprint} in a wider context.

\section{The construction}\label{construction}

\def \rel {T}

Fix some natural $n\geq 2$ and $0<\tupsize<n$, and denote $\arity=n-\tupsize+1$. Let $\lang{}=\setcol{\rel_k}{k\geq \arity}$ be the relational language where $\rel_k$ is of arity $\tupsize\cdot k$. Throughout, we think of $\rel_k$ as a relation on $k$ distinct $\tupsize$-tuples -- condition (1) below. Additionally, we assume (2) and (3): 
\begin{enumerate}
\item
$\rel_k(\bar{x}_1,\bar{x}_2,\dots,\bar{x}_k)\implies \bigwedge_{i<j\leq k}\bar{x}_i\neq \bar{x}_j$
\item
$\rel_k(\bar{x}_1,\bar{x}_2,\dots,\bar{x}_k)\implies \bigwedge_{\sigma\in S_k} \rel_k(\bar{x}_{\sigma(1)},\bar{x}_{\sigma(2)},\dots,\bar{x}_{\sigma(k)})$
\item
$\rel_k(\bar{x}_1,\bar{x}_2,\dots,\bar{x}_k)\implies \bigwedge_{s\le i<k} \rel_i(\bar{x}_1,\bar{x}_2,\dots,\bar{x}_i)$
\end{enumerate}

\begin{definition}

\begin{enumerate}
\item 
For an $\lang{}$-structure $A$, we say that $K\subseteq A^{\tupsize}$ with $|K|\geq \arity$ is a \emph{clique} if for all $k\geq \arity$, whenever $\bar{x}_1,\dots,\bar{x}_k\in K$ are distinct, then $(\bar{x}_1,\dots,\bar{x}_k)\in \rel_k^A$.
\\We say that $K$ is a \emph{maximal} clique if there is no clique $K'\subseteq A^{\tupsize}$ such that $K'\supset K$.
For an $\lang{}$-structure $A$, define $\maxcliques{A}$ to be the set of maximal cliques of $A$.
\item
\label{cliques}
Define $\Cclq_0$ to be the class of finite $\lang{}$-structures $A$ such that whenever $K_1,K_2\in\maxcliques{A}$ are distinct, then $|K_1\cap K_2| < \arity$.
\end{enumerate}
\end{definition}

\begin{remark}\label{ReconstructFromCliques}
The language $\lang{}$ has relations of infinitely many arities, so that for any $M$, anti-chain under inclusion of sets of cardinality at least $s$, there exists a unique $\lang{}$-structure $A$ with $\maxcliques{A} = M$. Specifically, $A$ can be interpreted in the obvious way in the two sorted structure $(\bar A, M, I)$ where $\bar A = \bigcup M$ is the universe of $A$ and $I\subseteq \bar A^r\times \maxcliques{A}$ is interpreted as inclusion.

For structures $A\in\Cclq_0$, due to the restriction on intersections of cliques, the language $\lang{} = \set{T_{\arity}, T_{\arity+1}}$ suffices for this purpose ($T_s$ itself would not suffice: take $s+1$ pairwise distinct cliques, every two of which intersecting in a set of size $s-1$, such that their union, $A$, has size $s+1$. Then $A$ is not contained in any maximal clique, but this cannot be discerned using $T_s$ alone). The requirement $|K_1\cap K_2| < \arity$ is put in place in order to simplify the construction to come, e.g., it implies Observation \ref{uniqueExtensionObs}. For the purpose of getting the guarantee of the previous paragraph from a finite part of $\lang{}$, any uniform bound on $|K_1\cap K_2|$ will have sufficed.
\end{remark}

%We wish to enrich our language, so that for any $K$ clique in $A$, we will know exactly what is the maximal size of $E_B(K)$ for all superstructures $B\supseteq A$.
%
%Let $\lang{}=\lang{0}\cup\set{\rel_n^i}_{i,n\in\Nats}$ extend the language $\lang{0}$, where $\rel_n^i$ is a $2n$-ary relation such that for any $n$ pairs $\otn{\bar{x}}{n}$
%\begin{enumerate}
%\item
%$\rel_n^i(\bar{x}_1;\dots;\bar{x}_n)\implies \bigwedge_{\sigma\in S_n} \rel_n^i(\bar{x}_{\sigma(1)};\dots;\bar{x}_{\sigma(n)})$
%\item
%$\rel_n^i(\bar{x}_1;\dots;\bar{x}_n)\implies \bigwedge_{k,l\leq n}\rel(\bar{x}_k;\bar{x}_l)$
%\end{enumerate}
%One should think of $\rel_n^i(\bar{x}_1;\dots;\bar{x}_2)$ as specifying that there \emph{should} be exactly $i$ distinct maximal cliques extending $\set{\bar{x}_1,\dots,\bar{x}_n}$.
%
%\begin{observation}
%For an $\lang{}$-structure $A$, the set $\maxcliques{A}$ determines $T_k^A$. Explicitly,
%\[
%T_k^A = \bigcup_{K\in\maxcliques{A}}\setarg{(\bar{x}_1,\dots,\bar{x}_k)}{\bar{x}_1,\dots,\bar{x}_k\in K \text{ are distinct}}.
%\]
%\end{observation}

\begin{observation}
For an $\lang{}$-structure $A$ and a substructure $B\subseteq A$,
\[
\maxcliques{B} = \setcol{K\cap B^{\tupsize}}{K\in\maxcliques{A}, |K\cap B^{\tupsize}|\geq \arity}.
\]
\end{observation}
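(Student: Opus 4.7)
The plan is to prove the stated equality by a double inclusion, working in the $\Cclq_0$ context implicit in this part of the paper. For the easier inclusion $(\subseteq)$, take $L \in \maxcliques{B}$. Since $B$ is a substructure of $A$ the relations agree on tuples from $B$, so $L$ is a clique of $A$ as well and by finiteness extends to some $K \in \maxcliques{A}$. Then $L \subseteq K \cap B^{\tupsize}$, and the right-hand side is itself a clique of $B$ (of size $\geq |L| \geq \arity$); maximality of $L$ in $B$ forces $L = K \cap B^{\tupsize}$, exhibiting $L$ in the desired form.

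For the inclusion $(\supseteq)$, I would fix $K \in \maxcliques{A}$ with $|K \cap B^{\tupsize}| \geq \arity$ and set $L := K \cap B^{\tupsize}$. That $L$ is a clique of $B$ is immediate from the substructure relation. The content lies in verifying maximality in $B$: suppose for contradiction that $L \subsetneq L' \subseteq B^{\tupsize}$ with $L'$ a clique of $B$. Then $L'$ is a clique of $A$ as well and is contained in some $K_1 \in \maxcliques{A}$. Now $K \cap K_1 \supseteq L$, so $|K \cap K_1| \geq \arity$; the defining property of $\Cclq_0$ then forces $K_1 = K$, whence $L' \subseteq K \cap B^{\tupsize} = L$, contradicting $L \subsetneq L'$.

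The only real obstacle is this maximality step in the $(\supseteq)$ direction, and it is the only place where the hypothesis from $\Cclq_0$ — that distinct maximal cliques intersect in fewer than $\arity$ elements — is used. Without this uniform bound the observation can genuinely fail: an intersection $K \cap B^{\tupsize}$ can be properly contained in a larger clique in $B$ arising from a different maximal clique of $A$. This is presumably why the observation is placed immediately after the definition of $\Cclq_0$.
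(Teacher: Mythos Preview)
The paper records this statement as an Observation without supplying a proof, so there is nothing to compare against directly. Your double-inclusion argument is correct and is the natural one.

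You are also right that the $\Cclq_0$ hypothesis is essential for the $(\supseteq)$ direction, even though the paper's phrasing (``For an $\lang{}$-structure $A$\ldots'') does not make this explicit. A concrete witness to the failure in general: with $\tupsize=1$, $\arity=2$, take $A=\{a,b,c,d\}$ whose maximal cliques are $K_1=\{a,b,c\}$ and $K_2=\{a,b,d\}$ (so $A\notin\Cclq_0$), and let $B=\{a,b,c\}$; then $K_2\cap B=\{a,b\}$ appears on the right-hand side but is not maximal in $B$. That the paper's next observation (Observation~\ref{uniqueExtensionObs}) begins ``For $A\in\Cclq_0$'' confirms your reading that the ambient class is intended here as well. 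The $(\subseteq)$ direction, as you note, needs no such hypothesis.
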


%\begin{remark}
%	The requirement that the intersection of any two cliques is of size less than $n$ is not necessary. If this requirement is dropped, one must introduce some method of `predicting', for each clique of size $n$, in how many maximal-cliques it is contained. This can be achieved by considering each maximal clique as an imaginary element. For more details see Appendix \ref{appendix}.
%\end{remark}

\begin{notation}
For a finite set $X$ denote $\cardstar{X} = \max\set{0,|X|-(\arity - 1)}$
\end{notation}

\begin{definition}
For every finite $\lang{}$-structure $A$ define
\[
t(A) = \sum_{K\in\maxcliques{A}} \cardstar{K}
\]
and
\[
\predim(A) = |A| - t(A).
\]
\end{definition}

%By definition maximal cliques are classes of a (definable) equivalence relation on $r$-tuples, so in particular the collection of maximal classes can be identified with an imaginary sort. So $\lambda$ should be viewed as a variant of Hrushovski's pre-dimension function in a context without geometric elimination of imaginaries. 

\begin{observation}
\label{uniqueExtensionObs}
For $A\in\Cclq_0$, whenever $B\subseteq A$ and $K\in\maxcliques{B}$, there is a unique extension of $K$ to a maximal clique of $A$. In particular,
\[
t(B) =\sum_{K\in\maxcliques{A}}\cardstar{K\cap B}
\]
\end{observation}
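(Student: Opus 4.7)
The plan is to prove existence and uniqueness of the maximal extension separately, then derive the sum identity by composing this bijection with the preceding observation that describes $\maxcliques{B}$ in terms of $\maxcliques{A}$.

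For existence, note that $K$ is in particular a clique in $A$ (the defining relations $T_k$ are inherited under substructures), and any clique in a finite structure is contained in some maximal clique by a trivial Zorn-style argument on the finite poset of cliques ordered by inclusion. For uniqueness, suppose $K_1^*, K_2^* \in \maxcliques{A}$ both contain $K$. Then $K \subseteq K_1^* \cap K_2^*$, so $|K_1^* \cap K_2^*| \geq |K| \geq \arity$, and the defining property of $\Cclq_0$ forces $K_1^* = K_2^*$. This is the one place where the hypothesis $A \in \Cclq_0$ is used, and it is really the whole content of the statement.

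For the sum identity, apply the preceding observation: every $K' \in \maxcliques{B}$ is of the form $K^* \cap B^{\tupsize}$ for a unique $K^* \in \maxcliques{A}$ with $|K^* \cap B^{\tupsize}| \geq \arity$, and conversely any such $K^*$ yields some $K' \in \maxcliques{B}$. Uniqueness of the extension (just proved) shows this correspondence is a bijection between $\maxcliques{B}$ and $\{K^* \in \maxcliques{A} : |K^* \cap B^{\tupsize}| \geq \arity\}$. For the remaining maximal cliques $K^* \in \maxcliques{A}$, we have $|K^* \cap B^{\tupsize}| < \arity$, hence $\cardstar{K^* \cap B^{\tupsize}} = 0$ by definition of $\cardstar{\cdot}$. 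Summing,
\[
t(B) = \sum_{K' \in \maxcliques{B}} \cardstar{K'} = \sum_{\substack{K^* \in \maxcliques{A} \\ |K^* \cap B^{\tupsize}| \geq \arity}} \cardstar{K^* \cap B^{\tupsize}} = \sum_{K^* \in \maxcliques{A}} \cardstar{K^* \cap B^{\tupsize}},
\]
as required. I do not anticipate a genuine obstacle here; the only subtle point is recognising that the $\Cclq_0$ hypothesis is precisely what is needed to upgrade ``a maximal clique extension exists'' to ``it is unique,'' after which the identity is a matter of bookkeeping using the truncation built into $\cardstar{\cdot}$.
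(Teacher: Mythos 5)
The paper states this as an \emph{Observation} and gives no proof, so there is nothing to compare against; your argument supplies exactly the intended reasoning. You correctly identify that the $\Cclq_0$ condition (distinct maximal cliques intersect in fewer than $\arity$ elements) is precisely what upgrades existence of a maximal extension to uniqueness, and that the truncation $\cardstar{X}=\max\{0,|X|-(\arity-1)\}$ is what lets the sum be re-indexed over all of $\maxcliques{A}$ since the discarded cliques contribute $0$. The only cosmetic remark is that the paper's displayed formula writes $\cardstar{K\cap B}$ where it means $\cardstar{K\cap B^{\tupsize}}$, which you correctly interpreted.
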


\begin{lemma}
\label{submodularity of predim on clq}
The function $\predim:\Cclq_0\to \Ints$ is submodular. That is, 
\[
\predim(A\cup B) + \predim(A\cap B) \leq \predim(A) + \predim(B).
\]
whenever $D\in\Cclq_0$ and $A,B,A\cup B,A\cap B\subseteq D$ are induced substructures. 

\end{lemma}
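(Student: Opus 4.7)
The plan is to reduce the submodularity of $\predim = |\cdot|-t$ to a pointwise inequality indexed by the maximal cliques of the ambient structure $D$. Since cardinality is modular, $|A\cup B|+|A\cap B| = |A|+|B|$, so the target inequality is equivalent to the supermodularity of $t$:
\[
t(A)+t(B) \le t(A\cup B)+t(A\cap B).
\]

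The key step is to rewrite all four terms as sums over the single common set $\maxcliques{D}$. Substructures of $\Cclq_0$ lie in $\Cclq_0$ (as the intersection of two maximal cliques in a substructure is contained in the intersection of their unique extensions in $D$, hence has size less than $\arity$), so Observation \ref{uniqueExtensionObs} applies with $D$ as ambient: for any induced $X\subseteq D$,
\[
t(X) = \sum_{K\in\maxcliques{D}} \cardstar{K\cap X^{\tupsize}},
\]
with contributions from cliques meeting $X^{\tupsize}$ in fewer than $\arity$ tuples being automatically $0$. Applying this with $X$ each of $A$, $B$, $A\cup B$, $A\cap B$ reduces the lemma to the pointwise claim: for every $K\in\maxcliques{D}$,
\[
\cardstar{K\cap A^{\tupsize}}+\cardstar{K\cap B^{\tupsize}} \le \cardstar{K\cap (A\cup B)^{\tupsize}}+\cardstar{K\cap (A\cap B)^{\tupsize}}.
\]

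Now set $X_A = K\cap A^{\tupsize}$ and $X_B = K\cap B^{\tupsize}$. One has $X_A\cap X_B = K\cap (A\cap B)^{\tupsize}$, whereas in general only $X_A\cup X_B \subseteq K\cap (A\cup B)^{\tupsize}$ (the inclusion may be strict, since an $\tupsize$-tuple in $(A\cup B)^{\tupsize}$ may mix coordinates from $A\setminus B$ and $B\setminus A$). Since $\cardstar{\cdot}$ is monotone, it suffices to prove the set-theoretic inequality
\[
\cardstar{X_A}+\cardstar{X_B} \le \cardstar{X_A\cup X_B}+\cardstar{X_A\cap X_B}.
\]
This I would settle by a brief case analysis: if $|X_A|\le\arity-1$ then $\cardstar{X_A}=0$ and the inequality follows from $\cardstar{X_B}\le\cardstar{X_A\cup X_B}$; if $|X_A|,|X_B|\ge\arity$ then $\cardstar{X_A\cup X_B} = |X_A\cup X_B|-(\arity-1)$, and using modularity of cardinality the inequality reduces to $|X_A\cap X_B|-(\arity-1)\le \cardstar{X_A\cap X_B}$, which is simply the defining inequality of $\cardstar{\cdot}$.

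I expect no genuine obstacle: the conceptual content is the bookkeeping in the first reduction, namely that $t(X)$ for every $X\subseteq D$ can be computed by summing $\cardstar{K\cap X^{\tupsize}}$ over $\maxcliques{D}$, which decouples the problem over individual maximal cliques of $D$. Once this is set up, the remainder is the standard supermodularity of the truncation $\max\{0,\cdot-(\arity-1)\}$ of the cardinality, with the possible strict inclusion $X_A\cup X_B\subsetneq K\cap(A\cup B)^{\tupsize}$ only providing additional slack in our favor.
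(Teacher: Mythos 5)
Your proposal is correct and is essentially the paper's own argument: both reduce to a pointwise supermodularity inequality for $\cardstar{\cdot}$ via Observation \ref{uniqueExtensionObs}. The paper sums over $\maxcliques{A\cup B}$ rather than $\maxcliques{D}$ (a cosmetic difference; indexing over $D$ as you do avoids having to invoke closure of $\Cclq_0$ under substructures), and it asserts the key inequality $\cardstar{K}+\cardstar{K_{AB}}\ge\cardstar{K_A}+\cardstar{K_B}$ without the case analysis you spell out, so your write-up is if anything slightly more detailed.
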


\begin{proof}
For each $K\in\maxcliques{A\cup B}$ let $K_A,K_B,K_{AB}$ denote $K\cap A^{\tupsize} , K\cap B^{\tupsize}, K\cap (A\cap B)^{\tupsize}$ respectively. Observe that 
\[
\cardstar{K} + \cardstar{K_{AB}} \geq \cardstar{K_A} + \cardstar{K_B}
\]
for each $K\in\maxcliques{A\cup B}$. 
Thus, by Observation \ref{uniqueExtensionObs},
\begin{align*}
t(A\cup B) + t(A\cap B)
&= \sum_{K\in \maxcliques{A\cup B}} \cardstar{K} + \sum_{K\in \maxcliques{A\cup B}} \cardstar{K_{AB}}
\\
&\geq \sum_{K\in\maxcliques{A\cup B}}\cardstar{K_A} + \sum_{K\in\maxcliques{A\cup B}}\cardstar{K_B}
\\
&= t(A) + t(B),
\end{align*}
proving the statement.
\end{proof}

For a finite $\lang{}$-structure $A$ and a substructure $B\subseteq A$ define $\predim(A/B) = \predim(A\cup B) - \predim(B)$. Extend this definition to an infinite $\lang{}$-structure $A$ and a substructure $B$ by defining $\predim(A/B) = \inf\setcol{\predim(X/X\cap B)}{X\subseteq A, |X|<\infty}$. The definitions coincide on finite structures, by submodularity. Write $B\strong A$ if $\predim(X/B)\geq 0$ for every $B\subseteq X\subseteq A$. By submodularity, again, the relation $\strong$ is transitive.

\begin{definition}
Define $\Cclq$ to be the class of $\lang{}$-structures $A\in\Cclq_0$ such that $\emptyset\strong A$.
\end{definition}

\begin{remark}
One can define an analogue of $\Cclq$, where distinct cliques are allowed to intersect in arbitrarily large sets, and the only requirement of an $\lang{}$-structure $A$ is that $\emptyset\strong A$. In that case, the guarantee of a sufficient finite language of the second paragraph of remark \ref{ReconstructFromCliques} may not hold, seemingly. However, it can be shown that there is some finite $k=k(\tupsize,\arity)$ such that $\set{T_{\arity},\dots,T_k}$ does suffice. The proof is not entirely trivial, and we omit it. 
\end{remark}

\begin{definition}
Let $A_1,A_2\in \Cclq_0$ and let $B=A_1\cap A_2$ be a common induced substructure. Define the \emph{standard amalgam} of $A_1$ and $A_2$ over $B$ to be the unique $\lang{}$-structure $D$ whose universe is $A_1\cup A_2$ such that $\maxcliques{D} = M\cup M'$ where
\begin{gather*}
M = \setcol{K\in\maxcliques{A_1}\cup \maxcliques{A_2}}{|K\cap B^{\tupsize}|< \arity}
\\
M' = \setcol{K_1\cup K_2}{K_1\in\maxcliques{A_1}, K_2\in\maxcliques{A_2}, |K_1\cap K_2| \geq \arity}.
\end{gather*}
\end{definition}

\begin{observation}
Let $A_1,A_2\in \Cclq$ be such that $B = A_1\cap A_2$ is a common substructure. Let $D$ be the standard amalgam of $A_1$ and $A_2$ over $B$. Then $\predim(D/A_1) = \predim(A_2/B)$.
\end{observation}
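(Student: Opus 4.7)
The plan is to rearrange the claim into an additive identity and then verify it clique by clique. Since $\predim(D/A_1) = \predim(D) - \predim(A_1)$ (here $D = A_1 \cup A_2$) and $\predim(A_2/B) = \predim(A_2) - \predim(B)$, the claim is equivalent to
\[
\predim(D) + \predim(B) = \predim(A_1) + \predim(A_2).
\]
Since $D = A_1\cup A_2$ and $B = A_1\cap A_2$, the cardinality parts satisfy $|D|+|B| = |A_1|+|A_2|$ trivially, so it suffices to show the analogous identity for the $t$-function:
\[
t(D)+t(B) = t(A_1)+t(A_2).
\]

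To unpack this, I would first observe that $D\in\Cclq_0$: by construction the intersection of any two distinct members of $\maxcliques{D} = M \cup M'$ lies in one of $A_1$, $A_2$, or $B$, and in each case the $\Cclq_0$-assumption on $A_1,A_2$ forces the intersection to have size less than $s$. This puts us in position to apply Observation~\ref{uniqueExtensionObs} to $B, A_1, A_2$ as substructures of $D$, yielding
\[
t(X) = \sum_{K\in\maxcliques{D}}\cardstar{K\cap X} \qquad \text{for } X\in\{B,A_1,A_2,D\}.
\]
So the identity reduces to the pointwise claim: for every $K\in\maxcliques{D}$,
\[
\cardstar{K} + \cardstar{K\cap B} = \cardstar{K\cap A_1} + \cardstar{K\cap A_2}. \qquad (\ast)
\]

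I would verify $(\ast)$ by splitting into the two cases of the standard amalgam definition. If $K\in M$, say $K\in\maxcliques{A_1}$ with $|K\cap B^\tupsize|<\arity$, then $K\cap A_1 = K$ and $K\cap A_2 = K\cap B$, which has size strictly less than $\arity$; so the two starred cardinalities involving $B$ and $A_2$ both vanish and $(\ast)$ is immediate (symmetrically for $K\in\maxcliques{A_2}$). If $K = K_1\cup K_2\in M'$ with $K_i\in\maxcliques{A_i}$ and $|K_1\cap K_2|\geq\arity$, then $K\cap A_i = K_i$, and $K\cap B = K_1\cap K_2$ because $A_1\cap A_2 = B$; all four sets have size at least $\arity$, so the $\cardstar{\cdot}$'s are ordinary cardinalities shifted by $\arity-1$, and $(\ast)$ collapses to the inclusion-exclusion identity $|K|+|K_1\cap K_2| = |K_1|+|K_2|$.

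The main obstacle is conceptual rather than technical: one must correctly identify $\maxcliques{D}$, and in particular verify that no ``new'' maximal cliques arise beyond those listed in $M\cup M'$ and that no clique in that list is strictly contained in a larger one. Once this bookkeeping is done, both the verification of $D\in\Cclq_0$ and the case analysis for $(\ast)$ are straightforward, and summing $(\ast)$ over $K\in\maxcliques{D}$ yields the desired identity.
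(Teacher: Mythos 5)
Your reduction to the pointwise identity $(\ast)$ is the right idea and your two-case verification of $(\ast)$ is correct, but the step where you assert $D\in\Cclq_0$ is false, and with it the justification for invoking Observation~\ref{uniqueExtensionObs}. You claim the intersection of two distinct members of $\maxcliques{D}=M\cup M'$ always lies in one of $A_1$, $A_2$, $B$; this fails when both cliques come from $M'$. Take $\tupsize=1$, $\arity=3$, $B=\{1,2,3,4,5\}$ with $\maxcliques{B}=\{\{1,2,3\},\{3,4,5\}\}$, $A_1=B\cup\{a\}$ with $\maxcliques{A_1}=\{\{1,2,3,a\},\{3,4,5,a\}\}$, and $A_2=B\cup\{c\}$ with $\maxcliques{A_2}=\{\{1,2,3,c\},\{3,4,5,c\}\}$; all three lie in $\Cclq$. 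Here $M=\emptyset$ and $M'=\{\{1,2,3,a,c\},\{3,4,5,a,c\}\}$, and these two cliques meet in $\{3,a,c\}$, a set of size $3=\arity$ contained in none of $A_1,A_2,B$. So $D\notin\Cclq_0$. (Note the Observation itself still holds in this example: $\predim(D/A_1)=1-2=-1=2-3=\predim(A_2/B)$; the hypothesis does not assume $B\strong A_i$, so this is a legitimate instance.)

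The gap is repairable: what your argument actually needs is not $D\in\Cclq_0$ but the \emph{conclusion} of Observation~\ref{uniqueExtensionObs} for the three specific substructures $B,A_1,A_2\subseteq D$, and that can be checked directly from the definition of $M\cup M'$ and the $\Cclq_0$-hypothesis on $A_1,A_2$. For instance, if $K_1\in\maxcliques{A_1}$ and $K_1\subseteq K_1'\cup K_2'\in M'$, then $K_1\cap K_2'\subseteq K_2'\cap B^\tupsize\subseteq K_1'$ (the last inclusion because, by Observation~\ref{uniqueExtensionObs} applied inside $A_1\in\Cclq_0$, the maximal clique of $B$ extending $K_1'\cap K_2'$ extends uniquely in $A_1$, and that extension is $K_1'$); hence $K_1\subseteq K_1'$, so $K_1=K_1'$ by maximality, which then determines $K_2'$ uniquely. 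Similar reasoning covers the cases $K_1\in M$ and handles $B$ and $A_2$. Notice also that your computations $K\cap A_i^\tupsize=K_i$ and $K\cap B^\tupsize=K_1\cap K_2$ in the $M'$-case rest on exactly these same unique-extension facts, not merely on $A_1\cap A_2=B$ as written. The paper states this Observation without proof, so there is no official argument to compare against; once the bookkeeping above is supplied, your approach is sound.
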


For $A,B\in \Cclq_0$, say that an embedding of $\mathcal L$-structures $f:A\to B$ is \emph{strong} if $f[A]\strong B$. The class $\Cclq$ is closed under taking substructures and has the JEP. By the above observation, $\Cclq$ also has AP with respect to strong embeddings. Since $\Cclq_0$ has countably many isomorphism types it follows from  Fra\"iss\'e's Theorem that it has a unique countable generic structure $\Mclq$ defined by the property:
\begin{itemize}
\item[$(*)$]
Whenever $A\strong B\in \Cclq$ and $A\strong \Mclq$, there exists a strong embedding $f:B\to \Mclq$ fixing $A$ pointwise.
\end{itemize}

As was pointed out to us by the referee, if $r=1$ the above construction can be easily viewed as a standard Hrushovski's construction of a bi-partite graph. Namely, work in a two sorted language with sorts $P$ for points and $C$ for cliques and a unique relation, $R\subseteq P\times C$. If cliques are given weight $(s-1)$ then the standard pre-dimension function associated with such graphs is $\lambda(P,C) = |P| + (s-1)|C| - |R(P,C)|$ -- precisely the pre-dimension function associated with $\Mclq$ in case $r=1$ and provided each clique is related to at least $s$ points.

In case $r>1$ some adaptations are needed for a similar construction to work. E.g., add a binary relation $I$ on $P$ (to be interpreted as ``the domains of $\bar x$ and $\bar y$ intersect''). In that case points are precisely maximal $I$-cliques. If we require that elements in $P$ are sets of size $r$, rather than $r$-tuples\footnote{Allowing several relations in the construction this can be achieved by decomposing the relation $R$.},  then the restriction on the structure should be that every $p\in P$ belongs to exactly $r$ maximal $I$-cliques. 

We hope that a deeper study of this construction and its many possible variants may shed new light on the inner structure of Hrushovski's constructions and their reducts. 

\section{Relation to Hrushovski's ab initio construction}\label{reduct}

%Consider the class $\Csym$ of finite $\lang{}$-structures $A\in\Cclq$ with $\maxcliques{A}\subseteq [A]^n$. For $A\in\Csym$ we have
%\[
%\predim(A) = |A| - |\maxcliques{A}| = |A| - |\setcol{\set{\otn{a}{n}}}{(\otn{a}{n})\in S^A}|,
%\]
%the classic predimsnion function used in ab initio Hrushovski constructions.
%
%The class $\Csym$ is closed under standard amalgams (which are exactly free amalgams, in this case) and thus has a unique countable generic structure $\Msym$.
%
%For the non-symmetric version, let $R$ be an irreflexive $n$-ary relation. Let $\Corig$ be the class of finite $\set{R}$-structures $A$ such that $\set{a}\strong A$ for any $a\in A$, with respect\footnote{Whether $\strong$ is used with respect to $\predim$ or $\delta$ will always be clear from the context} to the predimension function $\delta(A) = |A| - |R^A|$. The class $\Corig$ is closed under free amalgamation and so has a unique countable generic structure $\Morig$.
%
%
%The structure $\Morig$ is the classic $n$-ary non-collapsed ab initio Hrushovski construction. The structure $\Msym$ is its symmetric version. In \cite{Mermelstein2016} it is shown that $\Morig$ is isomorphic to a proper reduct\footnote{in the sense of subsection \ref{reductSubsection} of this paper} of $\Msym$ and vice versa, and that the pregeometries\footnote{in the sense of subsection \ref{pregeometrySubsection} of this paper} associated with $\Morig$ and $\Msym$ are isomorphic.

As above, we fix some natural $n\geq 2$ and $0<\tupsize<n$, and denote $\arity=n-\tupsize+1$.
Let $\lang{n}=\set{R}$ be the language of a single $n$-ary relation. For a finite $\lang{n}$-structure $A$, define $\delta(A) = |A| - |R^A|$. Define $\strong$ for $\lang{n}$-structures as defined above with resepct to $\lambda$. Let $\Corig_n$ be the class of finite $\lang{n}$-structures $A$ with $\emptyset\strong A$. It is closed under substructures and free amalgamation, and thus is an amalgamation class. Let $\Morig_n$ be the generic structure for the class $\Corig_n$.

\subsection{$\Mclq$ is a proper reduct of $\Morig_n$}
\label{reductSubsection}

For every natural number $k\geq \arity$ we let $\formula{k}(\bar{x}_1,\dots,\bar{x}_k)$, where $|\bar{x}_i| = \tupsize$, be the $\lang{n}$-formula specifying that there exists an $(\arity - 1)$-tuple $\bar{y}$ whose elements are distinct from the elements of $\bar{x}_1,\dots,\bar{x}_k$ such that, denoting by
$X$ the set of all elements appearing in $\bar{x}_1,\dots,\bar{x}_k,\bar{y}$,
\begin{itemize}
	\item
	$\bar{x}_i\neq\bar{x}_j$ for all $1\leq i<j\leq k$
	\item
	$R^{X} = \setarg{(\bar{y},\bar{x}_i)}{1\leq i\leq k}$
	\item
	$X\strong B$ in any superstructure $B$ (in the ambient structure) with $|B|\leq |X|+\arity$.
\end{itemize}
Note that the last item of the above list guarantees that $\bar{y}$ is unique. For every $\lang{n}$-structure $A$, denote by $\reduct{A}$ the reduct $\tup{A,\formula{2}(A),\formula{3}(A),\dots}$. In the present subsection we prove: 

\begin{prop}\label{reductProp}
	The structure $\Mclq$ is isomorphic to $\Morig_n^T$. 
\end{prop}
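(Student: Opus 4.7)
The plan is to show that $\reduct{\Morig_n}$ is the generic structure for $\Cclq$; by the uniqueness clause of Fra\"iss\'e's theorem this will give $\Mclq\cong\reduct{\Morig_n}$. This amounts to two verifications: (i) every finite substructure of $\reduct{\Morig_n}$ lies in $\Cclq$, and (ii) $\reduct{\Morig_n}$ satisfies the extension property $(*)$.

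For (i), first one checks that $\reduct{\Morig_n}$ is a legitimate $\lang{}$-structure: conditions (1)--(3) from Section \ref{construction} follow from the symmetry of $R$ and the form of $\formula{k}$. The structural core is that each $\formula{k}$-tuple is witnessed by a \emph{unique} $(s-1)$-tuple $\bar y$ in $\Morig_n$: a second witness $\bar y'$ would produce a superstructure of $X$ with $k\geq s$ additional $R$-relations against only $s-1$ new elements, violating the final strongness clause of $\formula{k}$. Hence the maximal cliques of $\reduct{\Morig_n}$ are in canonical bijection with clique configurations $(\bar y,K)$ inside $\Morig_n$, and $|K_1\cap K_2|<s$ follows for distinct maximal cliques from the same consideration. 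For a finite $A\subseteq\reduct{\Morig_n}$ let $\bar A := A \cup \bigcup_{K\in\maxcliques{A}} \bar y_K\subseteq\Morig_n$; a direct count gives $|\bar A|\leq |A|+(s-1)|\maxcliques{A}|$ and $|R^{\bar A}|\geq \sum_K |K|$, so
\[
0 \leq \delta(\bar A) \leq |A| - \sum_{K\in\maxcliques{A}}(|K|-(s-1)) = \predim(A).
\]
Applied to every subset of $A$, this yields $\emptyset\strong A$ in $\Cclq$.

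For (ii), let $A\strong\reduct{\Morig_n}$ and $A\strong B\in\Cclq$. Replace $A$ by the self-sufficient $\lang{n}$-closure $\bar A$ of $A$ in $\Morig_n$; this contains the witness enrichment from (i) and satisfies $\bar A\strong\Morig_n$. I would then build an $\lang{n}$-structure $\bar B\supseteq\bar A$ by adjoining $B\setminus A$ together with a fresh $(s-1)$-tuple witness $\bar y_K$ for each new maximal clique $K$ of $B$, and the $R$-relations mandated by the clique data (Observation \ref{uniqueExtensionObs} makes these choices canonical). The same counting as above, applied relatively, yields $\delta(\bar B/\bar A)=\predim(B/A)\geq 0$, so $\bar B\in\Corig_n$ and $\bar A\strong\bar B$. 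Genericity of $\Morig_n$ then furnishes a strong $\lang{n}$-embedding $f:\bar B\to\Morig_n$ fixing $\bar A$; the restriction $f\restrictedto B$ is the candidate strong embedding into $\reduct{\Morig_n}$.

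The hardest step — and the very reason for the small-ball strongness clause in $\formula{k}$ — is verifying that $f(B)$ inherits no spurious $\formula{k}$-relations from the ambient $\Morig_n$. A hypothetical spurious witness $\bar y^*\in\Morig_n\setminus f(\bar B)$ for some tuple from $f(B)$ would produce a finite extension of $f(\bar B)$ in $\Morig_n$ with $k\geq s$ additional $R$-relations against only $s-1$ new elements, contradicting $f(\bar B)\strong\Morig_n$. Hence the $\lang{}$-structure on $f(B)$ matches $B$ on the nose, and, combined with the predimension identities, $f\restrictedto B$ is strong in $\Cclq$. This verifies $(*)$ and completes the proof.
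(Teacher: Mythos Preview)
Your approach is essentially the paper's: both show that $\reduct{\Morig_n}$ is the $(\Cclq,\strong)$-generic. The paper invokes a general reduct framework \cite[3.6.7,\,3.6.9]{Mermelstein2016} reducing the claim to three conditions; your (i) is their condition~(1), your ``no spurious witness'' paragraph is their condition~(2), and your construction of $\bar B$ from $\bar A$ is exactly their proof of condition~(3), down to the counting $\delta(\bar B/\bar A)=\predim(B/A)$.

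The one place your sketch is thinner than the paper is the final clause of (ii). From $f(\bar B)\strong_{\Corig_n}\Morig_n$ together with the relative identity $\delta(\bar B/\bar A)=\predim(B/A)$ it does \emph{not} immediately follow that $f(B)\strong_{\Cclq}\reduct{\Morig_n}$: you must control $\predim(Y/f(B))$ for an arbitrary finite $Y\supseteq f(B)$ in $\reduct{\Morig_n}$, and the $\lang{}$-structure on such a $Y$ is determined by witnesses living anywhere in $\Morig_n$, not just in $f(\bar B)$. This is exactly why the paper isolates condition~(2) --- that for $A\strong M$ the induced $\lang{}$-structure on $A$ coincides with $\reduct{A}$ --- as a separate lemma and feeds it into the cited framework. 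Your ``predimension identities'' remark gestures at this, but the actual argument (pass to the self-sufficient $\lang{n}$-closure $\hat Y$ of $Y$, apply (2) to read off the cliques of $Y$ inside $\hat Y$, then compare $\delta(\hat Y/f(\bar B))$ with $\predim(Y/f(B))$ via the count from (i)) is not written down. Similarly, for $\bar A\strong\bar B$ you need the inequality for every intermediate $X$, and the paper handles this by reducing to sets where $\bar z_L\subseteq X$ iff $|L\cap X^{\tupsize}|\ge\arity$; your ``applied relatively'' skips this reduction. None of this is a wrong idea --- just lemmas you are using without stating.
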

\smallskip
By \cite[3.6.7,3.6.9]{Mermelstein2016}, in order to show that $\reduct{\Morig_n}\cong \Mclq$, it suffices to show:
\begin{enumerate}
\item
If $A\in \Corig_n$ then $\reduct{A}\in\Cclq$.
\item
Whenever $M$ is an $\lang{n}$-structure, $A\in\Corig_n$ and $A\strong M$, the substructure induced on the set $A$ by $\reduct{M}$ is exactly $\reduct{A}$.
\item
For every $A\in\Corig_n$ and $B_c\in\Cclq$ such that $\reduct{A}\strong B_c$, there exists some $C\in\Corig_n$ such that $A\strong C$ and $B_c\strong \reduct{C}$.
\end{enumerate}
Showing also that
\begin{enumerate}
\item[$(4)$]
For any $F\in\Corig_n$, there exist $A,B\in\Corig_n$ with $F\strong A,B$ such that $A,B$ are not isomorphic over $F$, but $\reduct{A},\reduct{B}$ are isomorphic over $F$.
\end{enumerate}
will prove that $R^{\Morig_n}$ is not definable in $\reduct{\Morig_n}$.

We prove the statements in order.
\begin{proof}[Proof of $(1)$]
Let $A\in \Corig_n$. Denote $A_c = \reduct{A}$. Clearly, $A_c\in \Cclq_0$. Let $B\subseteq A$ be an arbitrary nonempty substructure of $A$. Denote by $B_c$ the substructure induced on $B$ by $A_c$. Consider
\[
\bar{B} = B\cup\setarg{\bar{y}\in A}{\setcol{\bar{x}}{B\models R(\bar{y},\bar{x})}\in\maxcliques{B_c}}
\]
as a substructure of $A$. Then
\begin{align*}
0\le \delta(\bar{B}) &= (|B| + |\bar{B}\setminus B|) - |R^{\bar{B}}|
\\
&\leq |B| + (\arity-1)\cdot|\maxcliques{B_c}| - \sum_{K\in\maxcliques{B_c}} |K|
\\
& = |B| - \sum_{K\in\maxcliques{B_c}} \cardstar{K}
\\
&= \predim(B_c)
\end{align*}
Thus, $\predim(B_c)\geq 0$ and $A_c\in \Cclq$.
\end{proof}

\begin{proof}[Proof of $(2)$]
Let $M$, $A$ be $\lang{n}$-structures, $A\in\Corig_n$ and $A\strong M$ .
Let $(\bar{a}_1,\dots,\bar{a}_k)\in  \formula{k}(A^{\tupsize})$ and assume $\bar{y}\in M$ is such that $M\models\bigwedge_{1\leq i\leq k} R(\bar{y},\bar{a}_i)$. It must be that $\bar{y}\in A^{\arity-1}$, for otherwise, by definition $k\ge \arity$, and $\delta(\bar{y}/A) < 0$ in contradiction to $A\strong M$.

Also by $A\strong M$, if $\set{\bar{a}_1,\dots,\bar{a}_k,\bar{y}}\not\strong B$ where $B\subseteq M$, then already within $A$ we have $\bar{a}_1\dots\bar{a}_k\bar{y}\not\strong B\cap A$, with $|B\cap A|\leq |B|$.

Thus, $M\models \formula{k}(\bar{a}_1,\dots,\bar{a}_k)$ if and only if $A\models\formula{k}(\bar{a}_1,\dots,\bar{a}_k)$.
\end{proof}

\begin{proof}[Proof of $(3)$]
Let $A\in \Corig_n$, $B_c\in\Cclq$ be such that $\reduct{A}\strong B_c$. Denote $A_c = \reduct{A}$, by $(1)$ we know $A_c\in \Cclq$. Denote by $B$ the underlying set of $B_c$.

For each $K\in \maxcliques{A_c}$ let $\bar{y}_K\in A^{\arity-1}$ be the unique tuple such that $\setarg{\bar{a}\in A^{\tupsize}}{(\bar{y}_K,\bar{a})\in R^A} = K$, and let $\widehat{K}\in \maxcliques{B_c}$ be the unique maximal clique in $B_c$ extending  $K$. Let
\[
R_0 = \bigcup_{K\in \maxcliques{A_c}} \setarg{(\bar{y}_K,\bar{b})}{\bar{b}\in \widehat{K}\setminus K}
\]

For each $L\in \maxcliques{B_c}$ such that $L\cap A^{\tupsize}\notin \maxcliques{A_c}$, let $\bar{z}_L$ be an $(\arity -1)$-tuple of new elements. Let
\[
R_1:=\{(\bar z_L, \bar b): \bar b\in L, L\in \maxcliques{B_c}, L\cap A^r\notin\maxcliques{A_c}\}
\]
and 
\[
	Z=\{\bar z: L\in \maxcliques{B_c}, L\cap A^r\notin \maxcliques{A_c}\}. 
\]
%\begin{gather*}
%R_1 = \bigcup_{\substack{L\in \maxcliques{B_c}\\L\cap A^{\tupsize}\notin \maxcliques{A_c}}} \setarg{(\bar{z}_L,\bar{b})}{\bar{b}\in L}
%\\
%Z = \setarg{\bar{z}_L}{L\in \maxcliques{B_c},~L\cap A^{\tupsize}\notin \maxcliques{A_c}}
%\end{gather*}

Define $C$ to be the $\lang{n}$-structure with underlying set $B\cup Z$ and
\[
R^C = R^A\cup R_0 \cup R_1
\]
and denote $C_c = \reduct{C}$. Clearly $B_c$ is a substructure of $C_c$. Moreover, $B_c\le A_c$. Indeed, as $\maxcliques{C_c} = \maxcliques{B_c}$, we get, by construction,  that $\predim(B\cup Z_0/B) = |Z_0|$ for any $Z_0\subseteq Z$.

It remains to show that $A\strong C$, i.e., that $\delta(X/A)\geq 0$ for any $A\subseteq X \subseteq C$. Note that if $L\in\maxcliques{B_c}$ with $L\cap A^{\tupsize}\notin \maxcliques{A_c}$, then $\delta(X\bar{z}_L/A)<{\delta((X\setminus \bar{z}_L)/A)}$ if and only if $|L\cap X^{\tupsize}|\geq \arity$. Thus, it will suffice to prove the inequality under the assumption that for all such $L$, $\bar{z}_L\in X^{(\arity-1)}$ if and only if $|L\cap X^{\tupsize}| \geq \arity$. Then
\begin{align*}
\delta(X/A) &= (|X\setminus(X\cap B)| +|(X\cap B)\setminus A|) - (|R_0\cap X^n| + |R_1\cap X^n|)
\\
&\geq |(X\cap B)\setminus A| - \sum_{K\in\maxcliques{A_c}} |(\widehat{K}\cap X^{\tupsize})\setminus K| - \sum_{\substack{L\in \maxcliques{B_c} \\ |L\cap X^{\tupsize}|\geq \arity \\ L\cap A^{\tupsize}\notin\maxcliques{A_c}}} \cardstar{L\cap X^{\tupsize}}
\\
&= \predim(X\cap B_c/A_c) \geq 0
\end{align*}
\end{proof}

\begin{proof}[Proof of $(4)$]
Let $F\in \Corig_n$. Define $A$, $B$ to be the $\lang{n}$-structures with underlying set $F\bar{a}$, where $\bar{a}$ is an $n$-tuple of new elements, and
\begin{align*}
R^A &= R^F
\\
R^B &= R^F\cup\set{\bar{a}}
\end{align*}
\end{proof}

This finishes the proof of Proposition \ref{reductProp}. We proceed to studying the geometry of $\Mclq$. 

\subsection{The pregeometry of $\Mclq$}
\label{pregeometrySubsection}
Recall that for an $\lang{}$-structure $A$ and some substructure $B\subseteq A$ 
\[
\closure{A}(B) = \bigcup\setarg{X\subseteq A}{\predim(X/X\cap B)\leq 0}.
\]
is a closure operator giving rise to a pregeometry on the underlying set of $A$. The dimension function associated to this closure operator is
\[
\dims(B) = \min\setcol{|X|}{X\subseteq B,~\closure{A}(X) = \closure{A}(B)}
\]
and we say that $B$ is independent in $A$ if $\dims(B) = |B|$. A pregeometry is defined in a similar way on $\lang{k}$-structures, for any natural $k$. For an $\lang{}$-structure or an $\lang{k}$-structure $A$, we denote its associated pregeometry by $\pregeometry(A)$.

A pregeometry is uniquely determined by any one of the following: its closure operator on finite sets, its dimension function, its collection of independent subsets. We say that two pregeometries are isomorphic if there exists a bijection between the two, preserving any one of these in both directions.

For amalgamation classes $(\mathcal{D}_1,\strong), (\mathcal{D}_2,\strong)$ of either $\lang{}$-structures or $\lang{k}$-structures, write $\mathcal{D}_1\overset{*}{\rightsquigarrow}\mathcal{D}_2$ if
\begin{itemize}
\item[$(*)$]
Whenever $A_1\in \mathcal{D}_1$, $A_2\in\mathcal{D}_2$, if $f:\pregeometry(A_1)\to \pregeometry(A_2)$ is an isomorphism of pregeometries, and $A_1\strong B_1\in\mathcal{D}_1$, then there exists some $B_1\strong C_1\in \mathcal{D}_1$ and $C_2\in\mathcal{D}_2$ with $A_2\strong C_2$ and an isomorphism $\widehat{f}:\pregeometry(C_1)\to \pregeometry(C_2)$ extending $f$.
\end{itemize}
By a standard back-and-forth argument \cite[Lemma 2.3]{DavidMarcoTwo}, assuming $\emptyset\in \mathcal{D}_1,\mathcal{D}_2$, if $\mathcal{D}_1\overset{*}{\rightsquigarrow}\mathcal{D}_2$ and $\mathcal{D}_2\overset{*}{\rightsquigarrow}\mathcal{D}_1$, then $\pregeometry(\mathbb{D}_1) \cong \pregeometry(\mathbb{D}_2)$, where $\mathbb{D}_i$ is the countable generic structure of $\mathcal{D}_i$.

The following proposition will conclude the proof of our main theorem: 
\begin{prop}
	$\pregeometry(\Mclq)\cong\pregeometry(\Morig_{\tupsize \arity})$
\end{prop}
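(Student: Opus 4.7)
The plan is to apply the back-and-forth criterion $(*)$ stated just before the proposition, establishing both $\Cclq\overset{*}{\rightsquigarrow}\Corig_{rs}$ and $\Corig_{rs}\overset{*}{\rightsquigarrow}\Cclq$, so that the generics have isomorphic pregeometries. The bridge between the two sides is the observation that a maximal clique $K\subseteq A^r$ of size exactly $s$ whose $r$-tuples are pairwise disjoint involves $rs$ distinct points of $A$ and contributes $\cardstar{K}=1$ to $t(A)$ — it behaves exactly like a single $rs$-ary relation in $\Corig_{rs}$. More generally, a clique of size $k\ge s$ with $r$-tuples on $rk$ disjoint points contributes $k-s+1$ to $t$, and the same contribution arises in $\Corig_{rs}$ from a ``book'' of $k-s+1$ many $rs$-tuples sharing a common $r(s-1)$-subset.

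For $\Cclq\overset{*}{\rightsquigarrow}\Corig_{rs}$, given a pregeometry isomorphism $f:\pregeometry(A_1)\to\pregeometry(A_2)$ and an extension $A_1\strong B_1\in\Cclq$, I would first absorb $B_1$ into a larger $B_1\strong C_1\in\Cclq$ in which every maximal clique that meets $B_1\setminus A_1$ has been replaced by a ``generic'' clique of the same size — that is, a clique whose $r$-tuples involving new points are pairwise disjoint and supported on freshly added elements. One verifies that this padding is strong over $B_1$ (new points $X$ contribute $\predim(X/B_1)=0$ since each added $r$-tuple is paired with fresh points and counted exactly once in $t$) and does not change the pregeometry of independent points already present. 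In the padded $C_1$, every non-trivial predimension configuration decomposes into ``clique atoms'' each of which is in bijective correspondence with an $rs$-tuple. I then build $C_2$ by adjoining to $A_2$, for each such atom in $C_1\setminus A_1$, a corresponding book of $rs$-ary relations on fresh points, and extend $f$ by mapping the fresh points accordingly. The symmetric direction $\Corig_{rs}\overset{*}{\rightsquigarrow}\Cclq$ proceeds dually: given $A_2\strong B_2\in\Corig_{rs}$, one first replaces $B_2$ by a strong extension in which relations avoid excessive overlap, and then realises each $rs$-tuple by an $s$-clique of pairwise disjoint $r$-tuples on fresh points in $\Cclq$.

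The main obstacle is the book-keeping needed to show that the resulting extension of $f$ is truly a pregeometry isomorphism, i.e.\ that for every $X\subseteq C_1$ one has $\predim(X/X\cap A_1)\le 0$ iff $\delta(\widehat X/\widehat X\cap A_2)\le 0$ for the corresponding image $\widehat X\subseteq C_2$. This reduces, by submodularity and the defining inequality $\cardstar{K}=|K|-(s-1)$ for $|K|\ge s$, to tracking how partial cliques $K\cap X^r$ contribute to $t$ versus how the corresponding $rs$-ary relations contribute to $|R^{\widehat X}|$. Using that cliques in $C_1$ are now ``generic'', every $r$-tuple of $X$ lies in at most one clique, and the counts line up: the $k-s+1$ contribution of a size-$k$ clique $K$ to $t(X)$ matches the $k-s+1$ tuples of the corresponding book that lie inside $\widehat X$. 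This also ensures $\emptyset\strong C_1$ and $\emptyset\strong C_2$, so $C_1\in\Cclq$ and $C_2\in\Corig_{rs}$ as required. The combinatorial verification of these identities, together with the construction of the padded $C_1$ and its image $C_2$, is the bulk of the proof.
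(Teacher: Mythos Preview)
Your overall plan—establish both $\Cclq\overset{*}{\rightsquigarrow}\Corig_{rs}$ and $\Corig_{rs}\overset{*}{\rightsquigarrow}\Cclq$ and invoke the back-and-forth criterion—and the basic dictionary (a size-$s$ clique contributes $1$ to $t$, matching one $rs$-relation; a size-$k$ clique matches a ``book'' of $k-s+1$ relations sharing an $r(s-1)$-base) are exactly the paper's. The concrete constructions you describe, however, diverge from the paper's and as written contain gaps.

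For $\Cclq\overset{*}{\rightsquigarrow}\Corig_{rs}$ the paper does \emph{no} padding: it takes $C_1=B_1$ on the same universe $B$, picks for each $K\in\maxcliques{B_c}$ a base $E_K\in[K]^{s-1}$ (inside $A^r$ when $K$ extends a clique of $A_c$), and sets $R^{B_{rs}}=R^{A_{rs}}\cup\{f(K)\bar b:\bar b\in K\setminus E_K\text{ or }K\setminus A^r\}$. There is no need for the $r$-tuples of $K$ to be pairwise disjoint; the counts match on ``good'' sets directly. Your padding step is therefore superfluous, and the specific claim that it satisfies $\predim(X/B_1)=0$ is false for $r>1$: adjoining an $r$-tuple of fresh points to an existing clique adds $r$ to $|X|$ and $1$ to $t$, so $\predim$ increases by $r-1$. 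Separately, one cannot ``replace'' a maximal clique of $B_1$ inside a strong extension $B_1\strong C_1$: by Observation~\ref{uniqueExtensionObs} every maximal clique of $B_1$ extends uniquely to one of $C_1$, so it is still there.

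For $\Corig_{rs}\overset{*}{\rightsquigarrow}\Cclq$ the paper does normalise first (Lemma~\ref{remove pathologies}), but the purpose of that lemma is only to arrange that each new relation $(\bar a_1,\dots,\bar a_s)\in R^D\setminus R^A$ has pairwise distinct $r$-blocks and that no two new relations are permutations of one another. After that, each such relation is realised as the size-$s$ clique $\{\bar a_1,\dots,\bar a_s\}$ on the \emph{same} universe $D$; no fresh points are added on the $\Cclq$ side. Your phrase ``on fresh points in $\Cclq$'' is at best imprecise: taken literally, the new clique is disconnected from the entries of the relation, and then no bijective extension of $f$ can be a pregeometry isomorphism.
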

We split the proof of the proposition between the next three lemmas. Throughout, we think of a relation in $\lang{{\tupsize\arity}}$-structures as an $\tupsize\arity$-tuple as well as an $\arity$-tuple of $\tupsize$-tuples. We begin with a technical lemma: 
\begin{lemma}
\label{remove pathologies}
For every $A\strong B\in \Corig_{\tupsize\arity}$ there exist $B\strong C\in \Corig_{\tupsize\arity}$ and $A\strong D\in \Corig_{\tupsize\arity}$ such that $\pregeometry(D) = \pregeometry(C)$ and $(\bar{b}_{\sigma(1)},\dots,\bar{b}_{\sigma(\arity)})\notin R^D$ for all $(\bar{b}_1,\dots,\bar{b}_{\arity})\in R^D\setminus R^A$ and  $\sigma\in S_{\arity}\setminus \{\mathrm{id}\}$.
\end{lemma}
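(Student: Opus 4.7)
The plan is to proceed by induction on the number of ``pathological'' $S_s$-orbits of tuples in $R^B \setminus R^A$---those orbits of size greater than one, or those consisting of a single tuple with some repeated sub-tuple. The base case, with no pathological orbits, is trivial (set $C = D = B$). Throughout, $\delta$ denotes the pre-dimension on $\Corig_{rs}$.

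For the inductive step, fix one pathological orbit $O \subseteq R^B \setminus R^A$ with canonical representative $\bar t_O = (\bar b_1, \ldots, \bar b_s)$, and build both $C$ and $D$ on a common universe $U = B \cup F$, where $F$ is a finite set of fresh $r$-tuples whose cardinality depends on $r, s, |O|$. Define $R^D = (R^B \setminus (O \setminus \{\bar t_O\}))$ augmented with a system of ``anchor'' relations supported on $F \cup \{\bar b_1,\ldots,\bar b_s\}$. The anchors are chosen so that (i) they introduce no pathological $S_s$-symmetry among themselves, and (ii) together with the canonical $\bar t_O$ they place every $\bar c \in F$ into $\cl^D(\bar b_1,\ldots,\bar b_s)$ and keep all $\bar b_i$ mutually in the same $\cl^D$-class. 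Define $R^C = R^B$ together with the \emph{same} anchor relations (so $R^C \supseteq R^B$ and new pathologies in $R^C$ are permitted). Routine $\delta$-calculations using submodularity give $C, D \in \Corig_{rs}$, $B \strong C$ and $A \strong D$ once the anchor system is arranged to have total $\delta$-deficit zero on $F$. The orbit $O$ then no longer contributes a pathology to $R^D \setminus R^A$, so iterating the construction over all pathological orbits (finitely many, since $B$ is finite) removes them all.

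The main obstacle is verifying $\pregeometry(C) = \pregeometry(D)$ on the common universe $U$, despite the fact that $R^C \supsetneq R^D$. The extra relations $R^C \setminus R^D = O \setminus \{\bar t_O\}$ must be shown to be ``closure-redundant'' in $C$: they only link sub-tuples $\bar b_i, \bar b_j$ that are already mutually in $\cl^D$ via the canonical $\bar t_O$ and the anchor system, so they produce no new closure relationships. The case $r \ge 2$ is the delicate one. A single anchor relation involving a fresh $r$-tuple contributes $\delta$-deficit $r - 1 > 0$, and the non-pathological constraint limits non-permutation-equivalent relations on $\{\bar b_1, \ldots, \bar b_s\}$ alone to the single canonical $\bar t_O$. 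Hence enough fresh $r$-tuples must be thrown into $F$---roughly of order $r$ per non-canonical $\tau \in O$---so that the needed closure relationships among $\bar b_1,\ldots,\bar b_s$ can be ``routed through'' $F$ via anchors. Checking $\pregeometry(C) = \pregeometry(D)$ then reduces to comparing $\delta^C(X)$ and $\delta^D(X)$ for each finite $X \subseteq U$; since $\delta^D(X) - \delta^C(X) = |(O \setminus \{\bar t_O\}) \cap X^n|$, one applies submodularity of $\delta$ to argue that this difference never pushes a witness $Y \supseteq X$ with $\delta^C(Y/X) \le 0$ outside the collection of such witnesses in $D$, giving $\cl^C(X) = \cl^D(X)$.
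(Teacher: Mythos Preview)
Your plan has a structural gap that prevents it from establishing the conclusion. You keep the canonical representative $\bar t_O$ in $R^D$, but when $\bar t_O$ itself has a repeated $r$-sub-tuple (the ``single tuple with some repeated sub-tuple'' case you explicitly flag as pathological), some non-identity $\sigma\in S_s$ fixes $\bar t_O$, so $(\bar b_{\sigma(1)},\dots,\bar b_{\sigma(s)})=\bar t_O\in R^D$ and the pathology survives. The same problem occurs if some permutation of $\bar t_O$ already lies in $R^A$: you cannot remove it, yet the lemma forbids it from $R^D$. Your inductive step therefore does not decrease the count of pathological orbits in these cases, and the induction stalls. There is also a chaining issue: after one step you have $B\strong C$ and $A\strong D$ with $\pregeometry(C)=\pregeometry(D)$; applying the inductive hypothesis to $A\strong D$ yields $D\strong C'$ and $A\strong D'$, but nothing links $C'$ back to $B$, so you never obtain a single $C$ with $B\strong C$.

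Beyond this, the ``anchor system'' is exactly where the content of the proof lies, and you leave it entirely unspecified. The requirements you list (no new $S_s$-symmetry, total $\delta$-deficit zero on $F$, enough closure routed through $F$) are a wish list, not a construction; and the final sentence---deducing $\cl^C=\cl^D$ from $\delta^D(X)-\delta^C(X)=|(O\setminus\{\bar t_O\})\cap X^n|$ ``by submodularity''---is not an argument, since a nonnegative difference of pre-dimensions does not by itself control closure.

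The paper avoids all of this by a direct one-step construction. It discards \emph{every} relation of $R^B\setminus R^A$ (not just the non-canonical members of pathological orbits), and for each such $\bar a=(a_1,\dots,a_{rs})$ adjoins two fresh \emph{points} $x_{\bar a},y_{\bar a}$. In $C$ one adds the two relations
$(a_1,\dots,a_{rs-2},x_{\bar a},y_{\bar a})$ and $(a_3,\dots,a_{rs},y_{\bar a},x_{\bar a})$ on top of $R^B$; in $D$ one takes $R^A$ together with the three relations
$(a_1,\dots,a_{rs-1},x_{\bar a})$, $(a_2,\dots,a_{rs},y_{\bar a})$, $(a_1,a_3,\dots,a_{rs-2},a_{rs},x_{\bar a},y_{\bar a})$. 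Since every relation of $R^D\setminus R^A$ involves a fresh point in a fixed coordinate, no block-permutation can land back in $R^D$. The pregeometry equality is checked directly: for $X$ closed in either structure and any $\bar a$, the gadget $S_{\bar a}=\{a_1,\dots,a_{rs},x_{\bar a},y_{\bar a}\}$ satisfies $S_{\bar a}\subseteq X$ iff $|S_{\bar a}\cap X|\ge rs-1$, whence $\delta_C(X)=\delta_D(X)$ on closed sets and the closures coincide. This gives an explicit, non-inductive proof with no unspecified gadgets.
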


\begin{proof}
	For each $\bar{a}=(a_1,\dots,a_{\tupsize\arity})\in R^B\setminus R^A$ let $\set{x_{\bar{a}},y_{\bar{a}}}$ be two new elements and let \begin{align*}
	R^C_{\bar{a}} &= \set{(a_1,\dots,a_{\tupsize\arity-2},x_{\bar{a}},y_{\bar{a}}), (a_3,\dots,a_{\tupsize\arity},y_{\bar{a}},x_{\bar{a}})}
	\\
	R^D_{\bar{a}} &= \set{(a_1,\dots,a_{\tupsize\arity-1},x_{\bar{a}}), (a_2,\dots,a_{\tupsize\arity},y_{\bar{a}}),(a_1,a_3,\dots,a_{\tupsize\arity-2},a_{\tupsize\arity},x_{\bar{a}},y_{\bar{a}})}.
	\end{align*}
	Define $C,D$ to be the structures with universe
	\[
	V:=B\cup\bigcup_{\bar{a}\in R^B\setminus R^A}\set{x_{\bar{a}},y_{\bar{a}}}
	\]
	and
	\begin{gather*}
	R^C = R^B \cup \bigcup_{\bar{a}\in R^B\setminus R^A} R^C_{\bar{a}}
	\\
	R^D = R^A \cup \bigcup_{\bar{a}\in R^B\setminus R^A} R^D_{\bar{a}}
	\end{gather*}
	
	We show that $\pregeometry(C) = \pregeometry(D)$.  For $\bar{a}=(a_1,\dots,a_{rs})\in R^B\setminus R^A$ write $S_{\bar{a}} = \set{a_1,\dots,a_{rs},x_{\bar{a}},y_{\bar{a}}}$. Let $X\subseteq V$, it will suffice to show that $X$ is closed in $C$ if and only if it is closed in $D$. If $X$ is closed in $C$ and $\bar{a}\in R^B\setminus R^A$, then $S_{\bar{a}}\subseteq X$ if and only if $|S_{\bar{a}}\cap X| \geq rs-1$. The same holds if we assume $X$ is closed in $D$. So if $X$ is closed in $C$ we immediately get that  $\delta_C(X) = |X\setminus A| - 3\cdot|\setarg{\bar{a}}{S_{\bar{a}}\subseteq X}|+\delta_A(X\cap A)=\delta_D(X)$, implying that $X$ is closed in $D$, since the exact same calculation holds in the opposite direction as well. Indeed, if $Y=\cl^D(X)$ then our argument shows that $\delta_D(Y)=\delta_C(Y)\ge \delta_C(X)=\delta_D(X)$ so $X$ is already closed in $D$. A similar argument shows that if $X$ is closed in $D$ it is also closed in $C$. 
\end{proof}

\begin{lemma}
$(\Corig_{\tupsize \arity}, \strong)\overset{*}{\rightsquigarrow}(\Cclq,\strong)$
\end{lemma}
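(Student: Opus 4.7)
The plan is, given $A_1 \strong B_1 \in \Corig_{\tupsize\arity}$, $A_2 \in \Cclq$, and a pregeometry isomorphism $f:\pregeometry(A_1)\to\pregeometry(A_2)$, to construct strong extensions $B_1 \strong C_1 \in \Corig_{\tupsize\arity}$, $A_2 \strong C_2 \in \Cclq$, and a pregeometry isomorphism $\hat f:\pregeometry(C_1)\to\pregeometry(C_2)$ extending $f$. First I apply Lemma \ref{remove pathologies} to $A_1\strong B_1$ to obtain $B_1 \strong C_1$ and $A_1 \strong D_1$ in $\Corig_{\tupsize\arity}$ with $\pregeometry(C_1)=\pregeometry(D_1)$ and such that for every $(\bar b_1,\dots,\bar b_{\arity})\in R^{D_1}\setminus R^{A_1}$ no nontrivial permutation of this tuple lies in $R^{D_1}$ (which incidentally forces the $\tupsize$-tuples $\bar b_1,\dots,\bar b_{\arity}$ to be pairwise distinct, since a transposition swapping equal entries would be nontrivial but fix the tuple). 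It then suffices to produce $C_2 \in \Cclq$ with $A_2 \strong C_2$ and a pregeometry isomorphism $\pregeometry(D_1)\to\pregeometry(C_2)$ extending $f$.

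To construct $C_2$, identify $A_1$ with $A_2$ via $f$, take the underlying set of $C_2$ to be $A_2 \cup (D_1 \setminus A_1)$, and define $\hat f$ to be $f$ on $A_1$ and the identity on $D_1\setminus A_1$. Declare the maximal cliques of $C_2$ to be those of $A_2$ together with, for each new relation $(\bar b_1,\dots,\bar b_{\arity}) \in R^{D_1}\setminus R^{A_1}$, the $\arity$-element set $\{\hat f(\bar b_1),\dots,\hat f(\bar b_{\arity})\}$ of $\tupsize$-tuples (with $\hat f$ applied componentwise). By Remark \ref{ReconstructFromCliques} this specifies a unique $\lang{}$-structure. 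That $C_2\in\Cclq_0$ hinges on the following: each new relation involves at least one element of $D_1\setminus A_1$ (because $R^{A_1}=R^{D_1}\cap A_1^{\tupsize\arity}$), so each new clique contains at least one $\tupsize$-tuple outside $A_2^{\tupsize}$, making its intersection with any existing $A_2$-clique of size strictly less than $\arity$; and distinct new cliques cannot coincide as $\arity$-sets by the no-permutations property.

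For $A_2\strong C_2$, given $A_2\subseteq X\subseteq C_2$ write $X=A_2\cup Y$ and $\tilde X = A_1\cup Y \subseteq D_1$. A direct count, using that a new clique contributes $1$ to $t(X)$ precisely when all its $\tupsize$-tuples lie in $X$, gives
\[
\predim(X/A_2) = |Y| - \#\{(\bar b_1,\dots,\bar b_{\arity})\in R^{D_1}\setminus R^{A_1} \mid \bar b_i\subseteq \tilde X \text{ for all } i\} = \delta(\tilde X/A_1) \geq 0,
\]
the last inequality by $A_1\strong D_1$. The same counting, applied to an arbitrary $X\subseteq D_1$, yields the key identity
\[
\predim(\hat f(X)) - \predim(f(X\cap A_1)) = \delta(X) - \delta(X\cap A_1),
\]
asserting that the predimensions of $\hat f(X)$ and $X$ relative to the base $A$ agree. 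Combined with the fact that $f$ is already a pregeometry iso between $A_1$ and $A_2$, this identity is meant to transport the closure operator on $D_1$ to that on $C_2$ via $\hat f$, giving the required pregeometry isomorphism.

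The main obstacle is the final step: promoting the clean relative-predimension identity into an equality of closure operators. Since $f$ is only a pregeometry (not a structure) isomorphism, the absolute values of $\predim$ and $\delta$ on $A_2$- and $A_1$-subsets may genuinely differ, so one must rely exclusively on the matroid content of the two $A$-pregeometries together with the exact numerical match of the relative predimensions. A clean route is to show that $\hat f$ preserves the ``self-sufficient over a fixed parameter set'' relation (using the relative identity and $A_1\strong D_1$, $A_2\strong C_2$) and then invoke the characterization of $\closure{D_1}$ and $\closure{C_2}$ as minimal self-sufficient supersets of a given finite set.
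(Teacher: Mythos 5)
Your proof follows essentially the same route as the paper: apply Lemma~\ref{remove pathologies}, build $C_2$ on the universe of $D_1$ (identified with $A_2\cup(D_1\setminus A_1)$) by adding one clique $\{\bar b_1,\dots,\bar b_s\}$ for each new relation, check the $\Cclq_0$ intersection condition, establish the relative predimension identity $\predim(X_c/X_c\cap A_c)=\delta(X/X\cap A)$, and conclude the pregeometry isomorphism. The step you flag as the ``main obstacle'' is exactly what the paper discharges by citing \cite[3.5.4.iii]{Mermelstein2016}, and your sketch of how to close it --- passing from the relative predimension identity to equality of closure operators via the characterization of closure as the minimal self-sufficient superset --- is the content of that reference; you also correctly make explicit why the no-permutation condition from Lemma~\ref{remove pathologies} is needed (so that the map from new relations to new cliques is a bijection and each new clique has exactly $s$ elements), a point the paper uses implicitly.
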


\begin{proof}
Let $A\in \Corig_{\tupsize\arity}$, $A_c\in\Cclq$ have isomorphic pregeometries. Without loss of generality, assume that $A$ and $A_c$ have the same underlying set and $\pregeometry(A) = \pregeometry(A_c)$. Let $B\in \Corig_{\tupsize\arity}$ be such that $A\strong B$. We have to show that there exist $B\le D\in \Corig_{\tupsize\arity}$ and $A\le C_c\in \Cclq$ such that $\pregeometry(D)\cong_A \pregeometry(C_c)$. Let $D\ge B$ be as provided by Lemma \ref{remove pathologies}. We will now construct $C_c\in \Cclq$ with universe $D$ whose clique-structure captures exactly the $R$-relations not already in $A$: 

For each $\bar{a} = (\bar{a}_1,\dots,\bar{a}_{\arity})\in R^D\setminus R^A$ let $K_{\bar{a}} = \set{\bar{a}_1,\dots,\bar{a}_{\arity}}$. By Remark \ref{ReconstructFromCliques} an $\lang{}$-structure with a given universe can be defined by specifying its maximal cliques. So we define the $\lang{}$-structure $C_c$ with universe $D$ and
\[
\maxcliques{C_c} = \maxcliques{A_c}\cup\setarg{K_{\bar{a}}}{\bar{a}\in R^D	\setminus R^A}.
\]
Observe that for any $L_1,L_2\in\maxcliques{C_c}$ distinct, $|L_1\cap L_2| < s$. If $L_1,L_2\in \maxcliques{A_c}$ then this is by $A_c\in \Cclq$. Otherwise, say if $L_1\notin \maxcliques{C_c}$, by $L_1\nsubseteq L_2$ we have $|L_1\cap L_2|<|L_1| = s$.

For a substructure $X\subseteq D$, denote by $X_c$ the substructure induced by $C_c$ on the universe of $X$. Then for any $X\subseteq D$ we have $\delta(X/X\cap A) = \predim(X_c/X_c\cap A_c)$. Thus, $A_c\strong C_c$, and $\pregeometry(D) = \pregeometry(C_c)$ by \cite[3.5.4.iii]{Mermelstein2016}.
\end{proof}

\begin{lemma}
$(\Cclq, \strong)\overset{*}{\rightsquigarrow}(\Corig_{\tupsize \arity},\strong)$
\end{lemma}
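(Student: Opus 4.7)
The plan mirrors the previous lemma's proof in reverse, now building a $\Corig_{\tupsize\arity}$-structure $D$ from a $\Cclq$-structure $B_c$. Given $A_c \in \Cclq$, $A \in \Corig_{\tupsize\arity}$ with $\pregeometry(A) = \pregeometry(A_c)$ on a common underlying set (obtained from $f$ WLOG), and $A_c \strong B_c \in \Cclq$, I would take $C_c = B_c$ and define $D \in \Corig_{\tupsize\arity}$ on the same underlying set, so that the identity map serves as the required pregeometry isomorphism $\widehat{f}$ extending $f$.

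For each new maximal clique $K \in \maxcliques{B_c} \setminus \maxcliques{A_c}$, fix an enumeration $\bar{x}_1^K, \ldots, \bar{x}_{k_K}^K$ of its $\tupsize$-tuples and add to $R^D$ the $\cardstar{K} = k_K - \arity + 1$ ``consecutive $\arity$-block'' relations $(\bar{x}_i^K, \ldots, \bar{x}_{i+\arity-1}^K)$ for $i = 1, \ldots, k_K - \arity + 1$ (viewed as $\tupsize\arity$-tuples by concatenation). By design, $\delta(K) = k_K - (k_K - \arity + 1) = \arity - 1 = \predim(K)$, so $D$ records the same dimensional content as $C_c$ at the full-clique level. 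The required verifications are: $(a)$ $A \strong D$ and $D \in \Corig_{\tupsize\arity}$, which reduces to a per-clique counting argument showing that any $Y$ with $A \subseteq Y \subseteq D$ satisfies $\delta(Y/A) \geq \predim(Y/A_c)$, since the number of consecutive-block relations from $K$ lying inside $Y$ is at most $\cardstar{K \cap Y^{\tupsize}}$, and the latter quantity is non-negative by $A_c \strong B_c$; and $(b)$ $\closure{D}(X) = \closure{C_c}(X)$ for every finite $X$.

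The principal obstacle is step $(b)$. The key mechanism is the ``full-clique path'': whenever $|X \cap K| \geq \arity - 1$, taking $Y = X \cup K$ yields $\delta(Y/X) \leq 0$ in $D$ and $\predim(Y/X) \leq 0$ in $C_c$ (with the common value $\arity - 1 - |X \cap K|$), so $K$ enters the closure of $X$ in both structures; when $|X \cap K| < \arity - 1$, neither closure picks up any element of $K$ via a path going through $K$. The subtlety is that when $X \cap K$ is a ``scattered'' subset of the enumeration of $K$, $\delta(X \cap K)$ may strictly exceed $\predim(X \cap K)$, so one must rely on the indirect full-clique witness rather than a direct one-element extension. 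Interactions between overlapping new cliques, and between new cliques and $A$-cliques, require careful bookkeeping and may warrant a cleanup in the spirit of Lemma \ref{remove pathologies} (arranging, for example, that distinct new cliques share strictly fewer than $\arity - 1$ common $\tupsize$-tuples). Should the direct linear encoding prove insufficient in some edge case, a more elaborate construction in the spirit of the proof of $(3)$ in Section \ref{reductSubsection} -- extending both $C_c$ and $D$ with matched auxiliary $\tupsize$-tuples designed so that $A \strong D$ is preserved while clique contributions align -- would serve as a fallback.
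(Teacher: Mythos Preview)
Your path-encoding idea is a reasonable alternative to the paper's star encoding, but the plan has a genuine gap in its treatment of cliques of $B_c$ that extend cliques of $A_c$. Suppose $K\in\maxcliques{A_c}$ with $|K|=\arity$ grows to $\hat K=K\cup\{\bar b\}\in\maxcliques{B_c}$. Your rule adds $|\hat K|_*=2$ consecutive-block relations for $\hat K$. If the enumeration lists $K$ first, the block $(\bar a_1,\dots,\bar a_{\arity})$ lies entirely in $A$, so the induced substructure of $D$ on $A$ acquires a relation not in $R^A$ and $A$ is no longer a substructure of $D$. If instead you scatter $\bar b$ into the enumeration so that no block lies in $A$, then both new relations meet $D\setminus A$ and $\delta(D/A)=1-2=-1<0$, contradicting $A\strong D$. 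Either way the construction fails; the difficulty is that for a growing clique the \emph{relative} contribution to $\predim$ is $|\hat K|_*-|K|_*$, not $|\hat K|_*$, and your encoding does not account for the part already ``paid for'' inside $A_c$.

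The paper resolves exactly this by a star encoding: for each $K\in\maxcliques{B_c}$ choose an $(\arity-1)$-subset $E_K\subseteq K$ (forcing $E_K\subseteq A^{\tupsize}$ when $K\cap A^{\tupsize}\in\maxcliques{A_c}$) and add the relations $(\bar a_1^K,\dots,\bar a_{\arity-1}^K,\bar b)$ only for $\bar b\in K\setminus A^{\tupsize}$ (resp.\ $\bar b\in K\setminus E_K$) according to whether $K$ extends an $A_c$-clique or not. This keeps $A$ an induced substructure and makes the contribution from $K$ match $|\hat K\cap X|_*-|K|_*$ exactly on ``good'' sets $X$ (those with $E_K\subseteq X^{\tupsize}\iff |K\cap X^{\tupsize}|\ge\arity$). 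One then observes that every non-good set has a good extension with strictly smaller $\delta$, so closures are determined by good sets, where $\delta(\,\cdot\,/A)$ and $\predim(\,\cdot\,/A_c)$ agree on the nose. Your one-sided inequality $\delta(Y/A)\ge\predim(Y_c/A_c)$ gives $A\strong D$ (once the growing-clique problem is fixed) but is not by itself enough for $\pregeometry(D)=\pregeometry(C_c)$; you would still need an analogue of the good-set reduction, and the path encoding makes this less transparent because equality holds only when $K\cap X^{\tupsize}$ is a consecutive segment of your enumeration.
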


\begin{proof}
Let $A_c\in \Cclq$, $A_{\tupsize\arity}\in \Corig_{\tupsize\arity}$ with $\pregeometry(A_c) = \pregeometry(A_{\tupsize\arity})$ and underlying set $A$. Let $B_c\in \Cclq$ with $A_c\strong B_c$ and underlying set $B$. Fix $f:\maxcliques{B_c}\to {(B^{\tupsize})}^{\arity-1}$ with $f(K) = (\bar{a}_1^K,\dots,\bar{a}_{\arity-1}^K)$ and $E_K = \setcol{\bar{a}^K_i}{1\leq i\leq \arity-1}$ satisfying: 
\begin{enumerate}
\item
$E_K\in [K]^{\arity-1}$
\item 
If $K\cap A^{\tupsize}\in\maxcliques{A_c}$, then $E_K\subseteq (A^{\tupsize})$.
\end{enumerate}
We can require $f$ to be injective, but this is not necessary. Denote by $f(K)\bar{b}$ the concatenation of the tuple $f(K)$ with the tuple $\bar{b}$. Define
\begin{align*}
R_0 &= \setarg{f(K)\bar{b}}{K\in \maxcliques{B_c},K\cap A^{\tupsize}\in\maxcliques{A_c}, \bar{b}\in K\setminus A^{\tupsize}}
\\
R_1 &= \setarg{f(K)\bar{b}}{K\in \maxcliques{B_c},K\cap A^{\tupsize}\notin\maxcliques{A_c}, \bar{b}\in K\setminus E_K}
\end{align*}
and note that each $\bar{a}\bar{b}\in R_0\cup R_1$ has a unique $K\in\maxcliques{B_c}$ such that $\bar{a} = f(K)$ and $\bar{b}\in K$. Let $B_{\tupsize\arity}$ be the structure with underlying set $B$ and
\[
R^{B_{\tupsize\arity}} = R^{A_{\tupsize\arity}}\cup R_0 \cup R_1.
\]

Say that a set $X$ is good if for every $K\in\maxcliques{B_c}$, it holds that $E_K\subseteq X^{\tupsize}$ if and only if $|K\cap X^{\tupsize}|\geq {\arity}$. Then for a good $X$, we have $\predim(X_c/X_c\cap A_c) = \delta(X_{\tupsize\arity}/X_{\tupsize\arity}\cap A)$. Every non-good set $X$ has a good extension $\bar{X}$ with $\delta(\bar{X}/X)<0$, so $A_{\tupsize\arity}\strong B_{\tupsize\arity}$. This finishes the proof, as in the previous lemma.
\end{proof}

\bibliographystyle{plain}
\bibliography{myrefs}
\end{document}